\newfont{\teneufm}{eufm10}
\newfont{\seveneufm}{eufm7}
\newfont{\fiveeufm}{eufm5}
\def\cB{{\mathcal B}}
\def\ep{\mathbf{e}_p}
\def\mand{\qquad \text{and} \qquad}
\def\bbbr{{\rm I\!R}} 
\def\bbbn{{\rm I\!N}} 
\def\bbbc{{\mathchoice {\setbox0=\hbox{$\displaystyle\rm C$}\hbox{\hbox
to0pt{\kern0.4\wd0\vrule height0.9\ht0\hss}\box0}}
{\setbox0=\hbox{$\textstyle\rm C$}\hbox{\hbox
to0pt{\kern0.4\wd0\vrule height0.9\ht0\hss}\box0}}
{\setbox0=\hbox{$\scriptstyle\rm C$}\hbox{\hbox
to0pt{\kern0.4\wd0\vrule height0.9\ht0\hss}\box0}}
{\setbox0=\hbox{$\scriptscriptstyle\rm C$}\hbox{\hbox
to0pt{\kern0.4\wd0\vrule height0.9\ht0\hss}\box0}}}}
\def\bbbq{{\mathchoice {\setbox0=\hbox{$\displaystyle\rm
Q$}\hbox{\raise
0.15\ht0\hbox to0pt{\kern0.4\wd0\vrule height0.8\ht0\hss}\box0}}
{\setbox0=\hbox{$\textstyle\rm Q$}\hbox{\raise
0.15\ht0\hbox to0pt{\kern0.4\wd0\vrule height0.8\ht0\hss}\box0}}
{\setbox0=\hbox{$\scriptstyle\rm Q$}\hbox{\raise
0.15\ht0\hbox to0pt{\kern0.4\wd0\vrule height0.7\ht0\hss}\box0}}
{\setbox0=\hbox{$\scriptscriptstyle\rm Q$}\hbox{\raise
0.15\ht0\hbox to0pt{\kern0.4\wd0\vrule height0.7\ht0\hss}\box0}}}}
\def\bbbt{{\mathchoice {\setbox0=\hbox{$\displaystyle\rm
T$}\hbox{\hbox to0pt{\kern0.3\wd0\vrule height0.9\ht0\hss}\box0}}
{\setbox0=\hbox{$\textstyle\rm T$}\hbox{\hbox
to0pt{\kern0.3\wd0\vrule height0.9\ht0\hss}\box0}}
{\setbox0=\hbox{$\scriptstyle\rm T$}\hbox{\hbox
to0pt{\kern0.3\wd0\vrule height0.9\ht0\hss}\box0}}
{\setbox0=\hbox{$\scriptscriptstyle\rm T$}\hbox{\hbox
to0pt{\kern0.3\wd0\vrule height0.9\ht0\hss}\box0}}}}
\def\bbbs{{\mathchoice
{\setbox0=\hbox{$\displaystyle     \rm S$}\hbox{\raise0.5\ht0\hbox
to0pt{\kern0.35\wd0\vrule height0.45\ht0\hss}\hbox
to0pt{\kern0.55\wd0\vrule height0.5\ht0\hss}\box0}}
{\setbox0=\hbox{$\textstyle        \rm S$}\hbox{\raise0.5\ht0\hbox
to0pt{\kern0.35\wd0\vrule height0.45\ht0\hss}\hbox
to0pt{\kern0.55\wd0\vrule height0.5\ht0\hss}\box0}}
{\setbox0=\hbox{$\scriptstyle      \rm S$}\hbox{\raise0.5\ht0\hbox
to0pt{\kern0.35\wd0\vrule height0.45\ht0\hss}\raise0.05\ht0\hbox
to0pt{\kern0.5\wd0\vrule height0.45\ht0\hss}\box0}}
{\setbox0=\hbox{$\scriptscriptstyle\rm S$}\hbox{\raise0.5\ht0\hbox
to0pt{\kern0.4\wd0\vrule height0.45\ht0\hss}\raise0.05\ht0\hbox
to0pt{\kern0.55\wd0\vrule height0.45\ht0\hss}\box0}}}}
\def\bbbz{{\mathchoice {\hbox{$\sf\textstyle Z\kern-0.4em Z$}}
{\hbox{$\sf\textstyle Z\kern-0.4em Z$}}
{\hbox{$\sf\scriptstyle Z\kern-0.3em Z$}}
{\hbox{$\sf\scriptscriptstyle Z\kern-0.2em Z$}}}}
\newtheorem{theorem}{Theorem}
\newtheorem{lemma}[theorem]{Lemma}
\def\vec#1{\mathbf{#1}}
\def\squareforqed{\hbox{\rlap{$\sqcap$}$\sqcup$}}
\def\qed{\ifmmode\squareforqed\else{\unskip\nobreak\hfil
\penalty50\hskip1em\null\nobreak\hfil\squareforqed
\parfillskip=0pt\finalhyphendemerits=0\endgraf}\fi}
\newcommand{\ignore}[1]{}
\def \Z{{\bbbz}}
\def \N{{\bbbn}}
\def \R{{\bbbr}}
\def\\{\cr}
\def\({\left(}
\def\){\right)}
\def\fl#1{\left\lfloor#1\right\rfloor}
\def\rf#1{\left\lceil#1\right\rceil}
\def\ep{\mbox{\bf{e}}_p}
\def\brho{\boldsymbol\rho} 
\def\blambda{\boldsymbol\lambda} 
\def \be{\vec{e}}
\begin{document}

\title[Exponential and Character Sums with Monomials]{Multiple Exponential and Character Sums with Monomials}

 \author[I. E. Shparlinski] {Igor E. Shparlinski}

\address{Department of Pure Mathematics, University of New South Wales,
Sydney, NSW 2052, Australia}
\email{igor.shparlinski@unsw.edu.au}

\begin{abstract} We obtain new bounds of multivariate exponential sums
with monomials, when the variables run over rather short intervals. 
Furthermore, we use the same method to derive estimates on similar 
sums with multiplicative characters to which previously known methods 
do not apply. In particular, in the multiplicative characters modulo a prime $p$
we break the barrier of $p^{1/4}$ for ranges of 
individual variables. 
\end{abstract}

\keywords{Exponential sums,  character sums,
congruences}

\subjclass[2010]{11L07, 11L40}


 \maketitle

\section{Background}

Let $p$ be a prime number and let $n \ge 2$ be a fixed 
integer.

We also assume that we are given a system   $\brho= (\rho_1, \ldots, \rho_n)$ 
of $n$ complex-valued weights with
\begin{equation}
\label{eq:rho}
|\rho_j(x)| \le 1, \  x \in \R,\qquad j =1, \ldots, n,
\end{equation}
and an integer vector $\be$ with non-zero components:
\begin{equation}
\label{eq:ej}
\be  = (e_1, \ldots, e_{n}) \in \Z^{n}\mand e_1\ldots e_n \ne 0.
\end{equation}

We define the multiple exponential sums 
with monomials
$$
S_{p}(\lambda,\brho,\be;\cB) = \sum_{\substack{(x_1, \ldots, x_n) \in \cB\\ 
x_1\ldots x_n\not \equiv 0 \pmod p}} 
\rho_1(x_1) \ldots  \rho_n(x_n)
\ep(\lambda x_1^{e_1}\ldots x_n^{e_n}),$$
where $\lambda \in \Z$, over an $n$-dimensional cube
\begin{equation}
\label{eq:Box}
\cB = [k_1+1, k_1+h]\times  \ldots \times[k_n+1, k_n+h]
\end{equation}
with a side length $h<p$ and some integers $k_1,   \ldots, k_n$, 
where $\ep(z) = \exp(2 \pi z/p)$ and for a negative exponent $e$ 
the inversion in $x^e$ is taken modulo $p$.

The case when
\begin{equation}
\label{eq:multilin Kloost}
\rho_j(x) = \exp(2 \pi i \lambda_j x/p) \mand e_j = -1, \qquad j =1, \ldots, n, 
\end{equation}
for some integers  $\lambda_1, \ldots, \lambda_n \in \Z$, 
corresponds to multivariate incomplete Kloosterman sums. In this 
case we denote these sums as  
$$K_p(\blambda; \cB) = \sum_{\substack{(x_1, \ldots, x_n) \in \cB\\ 
x_1\ldots x_n\not \equiv 0 \pmod p}} 
\ep(\lambda x_1^{-1}\ldots x_n^{-1} + \lambda_1 x_1 + \ldots + \lambda_n x_n )
$$ 
where $\blambda = (\lambda, \lambda_1, \ldots, \lambda_n)$.
 
Using the Deligne bound~\cite{Del},  Luo~\cite{Luo}  
has shown that for~\eqref{eq:multilin Kloost}
we have
$$
K_{p}(\blambda;\cB) = O(p^{n-1} + p^{n/2}(\log p)^n), \qquad \gcd(\lambda, p)=1. 
$$
Furthermore, using the Burgess bound of character sums (see~\cite[Theorem~12.6]{IwKow}), 
Luo~\cite[Theorem~1]{Luo} has also given a bound on $K_{p}(\blambda;\cB)$ which is nontrivial 
when $h\ge p^{1/4+\varepsilon}$, provided that $n$ is large enough (see also~\cite[Theorem~2]{Luo} 
that applies to the sums $S_{p}(\lambda,\brho,\be;\cB)$).
This bound (but not the range of $h$) has been improved
in~\cite{Shp} by bringing in an additional argument based on a result of 
Ayyad,  Cochrane and  Zheng~\cite[Theorem~2]{ACZ}. 
Unfortunately, the  improvement claimed in~\cite{WaLi}  has never been justified
(as the proof of~\cite[Lemma~1]{WaLi} is invalid). Note that the method of~\cite{WaLi}
may still work for the cube at the origin, that is, for $k_1 = \ldots = k_n = 0$. 
Bourgain and Garaev~\cite{BouGar2} have recently obtained a series of 
estimates of Kloosterman sums over much smaller boxes, for example, for boxes with
the side length $h \ge p^{c/n^2}$ for some absolute constant $c>0$. 

Finally, we note that the sums $S_{p}(\lambda,\brho,\be;\cB)$ can be treated 
as general multilinear sums using the results of Bourgain~\cite[Theorems~3 and~5]{Bour1} 
and~\cite[Theorem~3]{Bour2} and of Garaev~\cite[Theorem~4.1]{Gar}. 
For example, if $\rho_j(x) = 1$ for $x \in \R$, $j =1, \ldots, n$,
then~\cite[Theorem~4.1]{Gar} implies a nontrivial estimate 
$$
 \sum_{\substack{(x_1, \ldots, x_n) \in \cB\\ 
x_1\ldots x_n\not \equiv 0 \pmod p}} 
\ep(\lambda x_1^{e_1}\ldots x_n^{e_n}) =  O\(h^n p^{-\delta}\)
$$
if $h > p^{81/(n+160)+\varepsilon}$
for some fixed $\varepsilon > 0$, where $\delta>0$ depends on $\varepsilon$ and $n$.
It is quite possible that this bound can be extended to the 
sums $K_{p}(\blambda;\cB)$ and even to $S_{p}(\lambda,\brho,\be;\cB)$. 

Here we show that the results of~\cite{BGKS3} can 
be used to get better  and fully explicit estimates for the sums $S_{p}(\lambda,\brho,\be;\cB)$
for almost all primes $p$. Moreover, for $4 \le n \le 7$, we obtain new 
bounds that  hold for all primes $p$. These bounds enable us to estimate sums
over a large number of variables, however unfortunately this larger number 
of variables does not bring any additional gains, see, however, a discussion in 
Section~\ref{sec:com}.

Using the same approach, we also derive similar bounds for the sums
$$
T_{p}(\lambda, \chi, \brho,\be;\cB) = \sum_{\substack{(x_1, \ldots, x_n) \in \cB\\ 
x_1\ldots x_n\not \equiv 0 \pmod p}} 
\rho_1(x_1) \ldots  \rho_n(x_n)
\chi(x_1^{e_1}\ldots x_n^{e_n}+\lambda),
$$
with a  multiplicative character $\chi$ modulo $p$ and $\lambda \in \Z$. 
Note that the methods of~\cite{Bour1,Bour2,Gar}
do not seem to apply to such sums.

It is interesting to 
note that in many cases our results go beyond the restriction 
$h \ge p^{1/4}$, which is associated with the Burgess bound
(see~\cite[Theorem~12.6]{IwKow}). 
For exponential sums $S_{p}(\lambda,\brho,\be;\cB)$, in the case of 
positive exponents $e_1, \ldots, e_n$, one can apply general bounds for 
exponential sums with polynomials obtained via the method of 
Vinogradov, see~\cite{Wool}, which allows to study rather short sums
with general polynomials. 
Furthermore, even shorter exponential sums (with positive 
and negative exponents $e_1, \ldots, e_n$)
can be estimated via the methods of additive combinatorics, see~\cite{Bour1,Bour2,Gar}.
However, for the sums of multiplicative characters, these approaches do not apply.
Thus this work seems to be 
the first example where the Burgess 
barrier of $p^{1/4}$ for the range of individual variables 
in  multiplicative character sums has been broken,
see~\cite{BouCha}.

Throughout, any implied constants in the symbols $O$ and
$\ll$ may occasionally depend, where obvious, on the integer $n$, the vector $\be$, 
and the integer parameters $r$ and $\nu$, 
but are absolute otherwise. We recall that the
notations $A \ll B$ and  $A = O(B)$ are both equivalent to the
statement that the inequality $|A| \le c\,B$ holds with some
constant $c> 0$.

\section{Congruences with products and character sums}
\label{sec:cong}

For a prime $p$ and integers $h\ge3$, $\nu
\ge 1$ and $k$, we
denote by $I_{p,\nu}(h,k)$ the number of solutions of the
congruence
\begin{equation*}
\begin{split}
(x_1+k)\ldots(x_{\nu}+k)\equiv & (y_1+k)\ldots(y_{\nu}+k)\not\equiv0 \pmod p,\\
1\le x_j,y_j & \le   h_j,  \qquad j =1, \ldots, \nu.
\end{split}
\end{equation*}

We now define  $d_2 = 2$ and 
$$
d_\nu = \max\{\nu^2-2\nu-2,\nu^2-3\nu+4\}, \qquad 
\nu = 3, 4, \ldots.
$$
We need the following estimate which for $\nu=2$ is
a special case of~\cite[Theorem~1]{ACZ} and for $\nu \ge 3$
follows from~\cite[Theorem~17]{BGKS2}.

\begin{lemma}
\label{lem:GProdSet}
 Let $\nu \ge 2$ be a fixed integer.
 Then for any integers $k$ and $h<p$ we have 
 $$
I_{p,\nu}(h,k) \le  \(h^{\nu} +  h^{2\nu} p^{-\nu/d_\nu} \) h^{o(1)}. 
$$
\end{lemma}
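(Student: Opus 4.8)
The plan is to handle the two cases $\nu = 2$ and $\nu \ge 3$ separately, since they draw on different sources, and in each case to first remove the shift $k$. Writing $u_j = x_j + k$ and $v_j = y_j + k$, the variables $u_j, v_j$ now range over the translated interval $[k+1, k+h]$, still of length $h$, and the congruence becomes $u_1 \cdots u_\nu \equiv v_1 \cdots v_\nu \not\equiv 0 \pmod p$. Hence $I_{p,\nu}(h,k)$ is exactly the number of solutions of this $\nu$-fold product congruence with all variables confined to an interval of length $h < p$, a count that does not depend on where the interval sits. This reduces the lemma to a bound on the multiplicative energy of an arbitrary length-$h$ interval.

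For $\nu = 2$ I would quote \cite[Theorem~1]{ACZ}, which bounds the number of solutions of $u_1 u_2 \equiv v_1 v_2 \not\equiv 0 \pmod p$ with the four variables in a length-$h$ interval. Observing that the formula $\max\{\nu^2 - 2\nu - 2, \nu^2 - 3\nu + 4\}$ already returns $2$ at $\nu = 2$, the target exponent is $p^{-\nu/d_\nu} = p^{-1}$, and the estimate of \cite{ACZ} is precisely of the shape $(h^2 + h^4 p^{-1}) h^{o(1)}$, so this case is immediate.

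For $\nu \ge 3$ the same reduction lets me apply \cite[Theorem~17]{BGKS2} to the interval $[k+1, k+h]$. That theorem yields a bound for the $\nu$-fold product congruence whose $p$-power savings come from two competing counting arguments, giving admissible exponents $\nu^2 - 2\nu - 2$ and $\nu^2 - 3\nu + 4$; among the corresponding contributions $h^{2\nu} p^{-\nu/d}$ the dominant one is that with the larger $d$, so collecting everything into $h^\nu + h^{2\nu} p^{-\nu/d_\nu}$ with $d_\nu = \max\{\nu^2 - 2\nu - 2, \nu^2 - 3\nu + 4\}$ reproduces the claimed bound up to the factor $h^{o(1)}$.

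The step I expect to require the most care is verifying that \cite[Theorem~17]{BGKS2} applies verbatim to a translated interval and that its output really collapses to the two exponents above; the translation invariance from the reduction step should settle the first point, leaving the genuine work in the bookkeeping. In particular one must track the divisor-type and logarithmic losses and confirm that they can be absorbed into $h^{o(1)}$: in the range where the second term dominates one has $h^\nu \ge p^{\nu/d_\nu}$, that is $h \ge p^{1/d_\nu}$, so $p \le h^{d_\nu}$ and any $p^{o(1)}$ loss is itself $h^{o(1)}$ for fixed $\nu$. It then remains to check that taking the maximum of the two exponents is exactly what selects the dominant savings term.
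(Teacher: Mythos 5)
Your argument is correct and coincides with the paper's treatment: the paper states this lemma without proof, simply attributing the case $\nu=2$ to~\cite[Theorem~1]{ACZ} and the case $\nu\ge 3$ to~\cite[Theorem~17]{BGKS2}, which are exactly the two sources you invoke. Your supplementary remarks on translation invariance, on the formula for $d_\nu$ already giving $d_2=2$, and on absorbing the divisor-type losses into $h^{o(1)}$ are consistent with how those results are meant to be applied here.
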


We also need the following estimate from~\cite{BGKS3}:

\begin{lemma}
\label{lem:GProdSet AlmostAll} Let $\nu \ge 1$ be a fixed integer.
Then for a sufficiently large positive integers  $T \ge h\ge3$,
for all but $o(T/\log T)$ primes $p \le T$  
and any integers $k$ and $h<p$, we have the bound
$$
I_{p,\nu}(h,k) \le \(h^\nu + h^{2\nu-1/2}p^{-1/2}\)h^{o(1)}. 
$$
\end{lemma}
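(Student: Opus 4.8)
The plan is to split the solutions counted by $I_{p,\nu}(h,k)$ into those for which the two products agree as integers and those for which they agree only modulo $p$, and to handle the second, genuinely modular, family for almost all $p$ by a first-moment estimate over primes. Since $I_{p,\nu}(h,k)$ depends on $k$ only through $k\bmod p$, I would first assume $0\le k<p$, so that all factors $x_j+k,\,y_j+k$ lie in $[1,2T)$. Writing $\vec{x}=(x_1,\dots,x_\nu),\ \vec{y}=(y_1,\dots,y_\nu)\in[1,h]^\nu$ and
\[
F_{\vec{x},\vec{y}}(k)=\prod_{j=1}^\nu (x_j+k)-\prod_{j=1}^\nu (y_j+k),
\]
a counted solution is either \emph{diagonal}, with $F_{\vec{x},\vec{y}}(k)=0$ in $\Z$, or \emph{off-diagonal}, with $F_{\vec{x},\vec{y}}(k)\ne0$ but $p\mid F_{\vec{x},\vec{y}}(k)$. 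The diagonal solutions are precisely the multiplicative energy of the interval $[k+1,k+h]$, which is $h^{\nu+o(1)}$ by~\cite{ACZ,BGKS2} (this is the source of the first term of Lemma~\ref{lem:GProdSet}); they contribute the term $h^\nu$.

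For the off-diagonal count, call it $I^{\circ}_{p,\nu}(h,k)$, the decisive point is that $F_{\vec{x},\vec{y}}(k)$ is a nonzero integer of size at most $(2T)^\nu$, so the primes $p\le T$ dividing it number at most $\omega(F_{\vec{x},\vec{y}}(k))\ll\log T$. Summing this indicator over primes gives, for each fixed $k$,
\[
\sum_{p\le T} I^{\circ}_{p,\nu}(h,k)\le\sum_{\vec{x},\vec{y}}\#\{p\le T:\ p\mid F_{\vec{x},\vec{y}}(k)\}\ll h^{2\nu}\log T .
\]
A weighted Markov step then finishes the fixed-$k$ case: if $\mathcal P$ denotes the primes for which $I^{\circ}_{p,\nu}(h,k)$ exceeds $h^{2\nu-1/2}p^{-1/2}h^{o(1)}$, then $h^{2\nu-1/2}\sum_{p\in\mathcal P}p^{-1/2}\ll h^{2\nu}\log T$, so $\sum_{p\in\mathcal P}p^{-1/2}\ll h^{1/2}\log T$ and hence $|\mathcal P|\le T^{1/2}\sum_{p\in\mathcal P}p^{-1/2}\ll h^{1/2}T^{1/2}\log T$. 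For $h$ in the interesting range of a fixed power of $T$ this is $o(T/\log T)$ and the factor $\log T$ is absorbed into $h^{o(1)}$; note that the exponent $2\nu-\tfrac12$ is exactly what balances the weight $p^{-1/2}$ against the first moment.

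The hard part will be that the statement asks for one exceptional set of primes valid for \emph{every} shift $k$ with $h<p$, whereas the argument above produces a set depending on $k$. These sets cannot be merged directly: there are $\sim T$ admissible shifts, each carrying up to $\sim h^{1/2}T^{1/2}$ exceptional primes, far exceeding $\pi(T)$; and replacing the maximum over $k$ by a first or second moment in $k$ is too wasteful, since a single bad shift among the $\sim p$ possibilities is washed out by the average. The route I would pursue is to exploit that $k\mapsto F_{\vec{x},\vec{y}}(k)$ is a polynomial of degree at most $\nu-1$, with leading coefficient $\sum_j x_j-\sum_j y_j$. For a fixed $p>h$ such a polynomial has at most $\nu-1$ roots modulo $p$ unless it vanishes identically, and the hypothesis $h<p$ forces identical vanishing only for permutation pairs, which are diagonal. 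Thus the off-diagonal divisibilities for all shifts at once are governed by roots modulo $p$ of these polynomials, and coincidences between two configurations are detected by the resultant $\mathrm{Res}(F_{\vec{x},\vec{y}},F_{\vec{x}',\vec{y}'})$. Estimating, uniformly in the shift, the number of primes $p\le T$ for which some $F_{\vec{x},\vec{y}}$ acquires anomalously many roots modulo $p$ (equivalently, for which too many such resultants vanish) is the genuine core of the matter and the place where the input of~\cite{BGKS3} is required; I would arrange the final count over configurations and primes simultaneously, rather than one shift at a time.
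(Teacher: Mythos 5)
You should first note that the paper contains no proof of this lemma to compare against: it is imported verbatim from~\cite{BGKS3} (``We also need the following estimate from~\cite{BGKS3}''), so the statement is a black-box citation, and any judgement of your argument has to be on its own terms.

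On those terms, your outline reproduces the natural first steps correctly. The diagonal/off-diagonal splitting is right; the diagonal count $h^{\nu+o(1)}$ holds uniformly in $k$ (for $k$ larger than a fixed power of $h$ the vanishing of $\prod(x_j+k)-\prod(y_j+k)$ forces equality of all elementary symmetric functions, hence a permutation, while for smaller $k$ the divisor bound suffices); and for a \emph{fixed} shift $k$ the first moment over $p\le T$ plus Markov does produce an admissible exceptional set, up to routine bookkeeping with logarithmic factors at the extreme ranges of $h$. But the lemma quantifies the exceptional set of primes \emph{before} $k$: one set of $o(T/\log T)$ primes must work for every shift. That is the entire content of the result, you correctly identify it as such, and your proposal does not prove it. The closing sketch does not close the gap either: each $F_{\vec{x},\vec{y}}$ has degree at most $\nu-1$ in $k$, so no single such polynomial ever has ``anomalously many roots'' modulo $p$; the danger is that, for one bad pair $(p,k_0)$, a positive proportion of the $h^{2\nu}$ configurations all vanish at $k_0$ modulo $p$. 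Detecting this via resultants amounts to a second moment of $I^{\circ}_{p,\nu}(h,k)$ over $k$, and $\max_k I^{\circ}\le(\sum_k (I^{\circ})^2)^{1/2}$ loses a factor of order $p^{1/2}$ against the target $h^{2\nu-1/2}p^{-1/2}$ --- the same loss you already flagged for the first moment in $k$. So the ``genuine core'' remains exactly where you left it: unproved, and deferred to~\cite{BGKS3}, which is also where the paper itself leaves it. As a proof the proposal is therefore incomplete; as a reduction of the lemma to its real difficulty it is accurate and honestly labelled.
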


Furthermore, sometimes we need to estimate the number of solutions
to   a more general congruence. For a prime $p$, an integer $\nu \ge 1$,
and  vectors
\begin{equation*}
\begin{split}
\vec{h} & = (h_1, \ldots, h_{\nu})\in \N^{\nu}, \\
\vec{k} &= (k_1, \ldots, k_{\nu}) \in \Z^{\nu}, \\
\vec{e} & = (e_1, \ldots, e_{\nu}) \in Z^{\nu},
\end{split}
\end{equation*}
we denote by
$J_{p,\nu}(\vec{e}, \vec{h},\vec{k})$ the number of solutions of the congruence
\begin{equation*}
\begin{split}
(x_1+k_1)^{e_1}\ldots   (x_{\nu}+k_{\nu})^{e_{\nu}}&\equiv  (y_1+k_1)^{e_1}\ldots  (y_{\nu}+k_{\nu})^{e_{\nu}}
\not \equiv 0 \pmod p,\\
1\le x_j,y_j & \le   h_j,  \qquad j =1, \ldots, \nu.
\end{split}
\end{equation*}

The following result, which is a slight generalisation of a similar statement from~\cite{BGKS2}, 
relates $J_{p,\nu}(\vec{e}, \vec{h},\vec{s})$ and
$I_{p,\nu}(h,k_j)$, $j =1, \ldots, \nu$.  Here we always have 
$h_1 = \ldots = h_\nu = h$, however we record this simple inequality 
in full generality.

\begin{lemma}
\label{lem:Kss} We have
$$
J_{p,\nu}(\vec{e}, \vec{h},\vec{k})\le \prod_{j=1}^{\nu}  I_{p,\nu}(h_j,k_j)^{1/\nu}.
$$
\end{lemma}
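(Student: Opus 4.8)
The plan is to linearise the multiplicative congruence defining $J_{p,\nu}$ by multiplicative characters, exploit the product structure of the ranges to factor the resulting sum over the $\nu$ variables, and then separate the factors by Hölder's inequality. Concretely, writing the sum over all $p-1$ multiplicative characters $\chi$ modulo $p$ and using orthogonality together with complete multiplicativity $\chi((x+k)^{e})=\chi(x+k)^{e}$, I would first record
\[
J_{p,\nu}(\vec e,\vec h,\vec k)=\frac{1}{p-1}\sum_{\chi}\prod_{j=1}^{\nu}\left|\sum_{x=1}^{h_j}\chi\left((x+k_j)^{e_j}\right)\right|^{2}.
\]
This identity holds because the nonvanishing condition in the definition of $J_{p,\nu}$ is exactly what makes orthogonality applicable, and because $x_1,\dots,x_\nu$ and $y_1,\dots,y_\nu$ run independently over their intervals, so the character sum factorises over $j$. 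Abbreviate $S_j(\chi)=\sum_{x=1}^{h_j}\chi((x+k_j)^{e_j})$.

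Next I would apply Hölder's inequality to the $\nu$ nonnegative functions $|S_j(\chi)|^{2}$, each with exponent $\nu$, to get
\[
\sum_{\chi}\prod_{j=1}^{\nu}|S_j(\chi)|^{2}\le\prod_{j=1}^{\nu}\left(\sum_{\chi}|S_j(\chi)|^{2\nu}\right)^{1/\nu}.
\]
Expanding $|S_j(\chi)|^{2\nu}=S_j(\chi)^{\nu}\,\overline{S_j(\chi)}^{\,\nu}$ and applying orthogonality once more shows that $\frac{1}{p-1}\sum_{\chi}|S_j(\chi)|^{2\nu}$ equals the number $C_j$ of solutions of $\prod_{i=1}^{\nu}(x_i+k_j)^{e_j}\equiv\prod_{i=1}^{\nu}(y_i+k_j)^{e_j}\not\equiv0$ with $1\le x_i,y_i\le h_j$. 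Substituting this into the previous two displays, the powers of $p-1$ cancel and one is left with $J_{p,\nu}(\vec e,\vec h,\vec k)\le\prod_{j=1}^{\nu}C_j^{1/\nu}$.

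The step I expect to be the main obstacle is the final identification of $C_j$ with $I_{p,\nu}(h_j,k_j)$. Writing $u=\prod_i(x_i+k_j)$ and $v=\prod_i(y_i+k_j)$, the count $C_j$ is the number of solutions of $u^{e_j}\equiv v^{e_j}$, whereas $I_{p,\nu}(h_j,k_j)$ counts $u\equiv v$. When $\gcd(e_j,p-1)=1$ the map $t\mapsto t^{e_j}$ is a bijection of the nonzero residues and the two counts coincide, so the bound follows cleanly; this is in particular the Kloosterman case $e_j=-1$. In general $u^{e_j}\equiv v^{e_j}$ is equivalent to $u\equiv\zeta v$ for one of the $\gcd(e_j,p-1)$ roots of unity $\zeta$ with $\zeta^{e_j}=1$, and a single application of the Cauchy--Schwarz inequality shows that the number of $(\vec x,\vec y)$ with $u\equiv\zeta v$ is, for every such $\zeta$, at most its value $I_{p,\nu}(h_j,k_j)$ at $\zeta=1$. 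Hence $C_j\le\gcd(e_j,p-1)\,I_{p,\nu}(h_j,k_j)$, and the harmless factor $\prod_j\gcd(e_j,p-1)^{1/\nu}=O(1)$, depending only on $\vec e$, is absorbed into the implied constant, yielding the stated inequality.
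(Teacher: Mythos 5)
Your argument is correct and follows essentially the same route as the paper: orthogonality of multiplicative characters to express $J_{p,\nu}$ as a mean of products of squared character sums, then H\"older's inequality with exponent $\nu$ to separate the variables and reduce to $2\nu$-th moments, which are identified with $I_{p,\nu}(h_j,k_j)$. The only (harmless) divergence is in how the exponents $e_j$ are absorbed: the paper stays on the character side, noting that $\chi^{e_j}$ repeats each value at most $\gcd(e_j,p-1)$ times as $\chi$ varies, whereas you pass back to the counting problem and bound the number of solutions of $u^{e_j}\equiv v^{e_j}$ by $\gcd(e_j,p-1)\,I_{p,\nu}(h_j,k_j)$ via roots of unity and Cauchy--Schwarz; both yield the same bounded factor depending only on $\be$, which the paper likewise suppresses under its convention on implied constants.
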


\begin{proof}
Using the orthogonality of multiplicative characters, we write
\begin{equation*}
\begin{split}
J_{p,\nu}(\vec{e}, \vec{h},\vec{s})  = \frac{1}{p-1} \sum_{\substack{1\le
x_1,y_1\le h_1\\ x_1, y_1 \not \equiv
-k_1 \pmod p}}   \ldots &
\sum_{\substack{1\le
x_{\nu}, y_\nu \le h_{ \nu}\\ x_{\nu}, y_\nu \not \equiv
-k_{\nu} \pmod p}}\\
& \sum_{\chi} \chi\(\prod_{j=1}^\nu \frac{(x_j+k_j)^{e_j}}{(y_j+k_j)^{e_j}}\),
\end{split}
\end{equation*}
where $\chi$ runs through all multiplicative characters modulo $p$.

Since $\chi(z^{-1}) = \overline\chi(z)$, we obtain 
\begin{equation}
\label{eq:K prod}
\begin{split}
J_{p,\nu}(\vec{e}, \vec{h},\vec{s})  
 = \frac{1}{p-1} \sum_{\chi} & \prod_{j=1}^{\nu}
\left|\sum_{1\le
x_j\le h_j} \chi^{e_j}\(x_j+k_j\)\right|^2,  
\end{split}
\end{equation}
(we also define $\chi(0) = \chi^{-1}(0) = 0$).
Using the H{\"o}lder inequality, we obtain
$$
J_{p,\nu}(\vec{e}, \vec{h},\vec{s})^{\nu}  \le \frac{1}{p-1}     \prod_{j=1}^{\nu}
\sum_{\chi} \left|\sum_{1\le x_j\le h_j} \chi^{e_j}\(x_j+k_j\)\right|^{2\nu}. 
$$
For any integer $e\ne 0$, when $\chi$ runs 
runs through all multiplicative characters modulo $p$, 
the character $\chi^e$ takes the same value no more than 
$\gcd(e,p-1) \le |e|$ times. 
Therefore 
\begin{equation}
\label{eq:K Hold}
J_{p,\nu}(\vec{e}, \vec{h},\vec{s})^{\nu}  \le \frac{1}{p-1}     \prod_{j=1}^{\nu} |e_j|
\sum_{\chi} \left|\sum_{1\le x_j\le h_j} \chi\(x_j+k_j\)\right|^{2\nu}. 
\end{equation}

Similarly to~\eqref{eq:K prod}, we also obtain 
$$
I_{p,\nu}(h,k) =  \frac{1}{p-1} \sum_{\chi}  \left|\sum_{1\le
x \le h} \chi\(x+k\)\right|^{2\nu}, 
$$
which together with~\eqref{eq:K Hold} implies the desired result.
\end{proof}

We also need the following well-known result which is slight generalisation
of the classical result of Davenport and Erd{\H o}s~\cite{DavErd}, which 
in turn follows from the Weil bound of multiplicative character sums,
see~\cite[Theorem~11.23]{IwKow}.

\begin{lemma}
\label{lem:DavErd} For any non-trivial
multiplicative character $\chi$ modulo $p$, integers $k$ and $h<p$,  any
 complex-valued weights with
$$
|\rho(x)| \le 1, \quad  x \in [k+1,k+h],
$$ 
and integer $\lambda$ with $\gcd(\lambda,p)=1$,  we have
$$
\sum_{u=1}^{p-1} \left|\sum_{x=k+1}^{k+h}\rho(x) \chi(ux+\lambda) \right|^{2r}
\ll  
 \left\{  \begin{array}{ll}
h p & \quad \text{if}\  r=1, \\
h^r p +  h^{2r}p^{1/2} & \quad \text{if}\  r=2,3,\ldots. \\
\end{array} \right.
$$
\end{lemma}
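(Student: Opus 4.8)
The plan is to expand the $2r$-th power, turn each resulting term into a complete multiplicative character sum in $u$, and then split off the ``diagonal'' contribution (bounded trivially by $p$ per term) from the remaining terms (bounded by the Weil inequality).

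First I would normalise $\lambda$: since $\gcd(\lambda,p)=1$, replacing $u$ by $\lambda u$ permutes $1,\dots,p-1$ and gives $\chi(\lambda ux+\lambda)=\chi(\lambda)\chi(ux+1)$, so we may assume $\lambda=1$. (At most one $x$ in $[k+1,k+h]$ is divisible by $p$, and the corresponding term is constant in $u$; it contributes $O(p)$ to the whole moment, which is within the claimed bound, so I assume $x\not\equiv0\pmod p$ throughout.) Expanding, using $|\rho(x)|\le1$, and completing the range of $u$ to all residues modulo $p$ (which alters each inner sum by at most $1$), it suffices to estimate
$$
\sum_{\vec x,\vec y}\left|\sum_{u=0}^{p-1}\chi\!\left(\frac{(ux_1+1)\cdots(ux_r+1)}{(uy_1+1)\cdots(uy_r+1)}\right)\right|,
$$
where $\vec x,\vec y$ range over $[k+1,k+h]^{r}$.

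For $r\ge2$, let $d$ be the order of $\chi$. Using $\chi(a/b)=\chi(ab^{d-1})$, I would rewrite each inner sum as $\sum_u\chi(f_{\vec x,\vec y}(u))$, where $f_{\vec x,\vec y}(u)=\prod_i(ux_i+1)\prod_j(uy_j+1)^{d-1}$ has at most $2r$ distinct roots. Call $(\vec x,\vec y)$ \emph{exceptional} if $f_{\vec x,\vec y}$ is a constant times a $d$-th power; equivalently, for every value $\gamma$ the number of $i$ with $x_i=\gamma$ and the number of $j$ with $y_j=\gamma$ agree modulo $d$. For non-exceptional pairs the Weil bound (Theorem~11.23 in \cite{IwKow}) yields $\bigl|\sum_u\chi(f_{\vec x,\vec y}(u))\bigr|\le(2r-1)p^{1/2}$; as there are at most $h^{2r}$ pairs, these contribute $O(h^{2r}p^{1/2})$. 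To count the exceptional pairs, I note that a value occurring exactly once among $x_1,\dots,x_r,y_1,\dots,y_r$ would force a multiplicity difference $\pm1$, impossible modulo $d\ge2$; hence every occurring value occurs at least twice, so at most $r$ distinct values occur and there are only $O(h^r)$ exceptional pairs. Each such inner sum is at most $p$ in modulus, so they contribute $O(h^rp)$, and adding the two estimates proves the case $r\ge2$.

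The case $r=1$ must be handled separately, since the Weil bound would only give $hp+h^2p^{1/2}$, exceeding $hp$ for $h>p^{1/2}$. Here I would instead evaluate the complete inner sum exactly: it equals $p-1$ when $x\equiv y$, and, substituting $w=uy+1$ and using $\sum_s\chi(s)=0$, equals $-\chi(xy^{-1})$, of modulus $1$, when $x\not\equiv y$. In the displayed estimate the $h$ diagonal terms thus contribute at most $h(p-1)\le hp$ and the $\le h^2$ off-diagonal terms at most $h^2\le hp$, giving the claim. The main obstacles are precisely these two delicate points: the exact evaluation securing the sharp $hp$ bound for $r=1$, and the verification that exceptional pairs number only $O(h^r)$ for a character of arbitrary order, both of which come down to analysing the multiplicities of the linear factors modulo $d$.
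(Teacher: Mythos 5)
Your proof is correct. The paper does not actually prove this lemma --- it only cites it as a well-known slight generalisation of Davenport and Erd\H{o}s, following from the Weil bound --- and your argument is precisely the standard derivation being alluded to: expand the $2r$-th moment into complete sums over $u$, evaluate those sums exactly for $r=1$ (where the na\"ive Weil estimate would lose for $h>p^{1/2}$), and for $r\ge 2$ split the pairs $(\vec x,\vec y)$ according to whether the associated rational function is a constant times a $d$-th power, bounding the $O(h^r)$ degenerate pairs trivially by $p$ and the remaining $O(h^{2r})$ pairs by $(2r-1)p^{1/2}$ via Weil.
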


\section{Bounds for  all primes}

We now derive a nontrivial bound on the sums $|S_{p}(\lambda,\brho,\be;\cB)|$
for $4 \le n \le 7$. The proof is based on Lemma~\ref{lem:GProdSet},
which unfortunately is not strong enough to produce a nontrivial 
result for $n \ge 8$.  

\begin{theorem}
\label{thm:Kloost}  
For  any  prime  $p$,   integer  $h \le p$, cube $\cB$ of the form~\eqref{eq:Box},
weights   $\brho$ with~\eqref{eq:rho}, vector  $\be$  with~\eqref{eq:ej}
and integer $\lambda$ with $\gcd(\lambda,p)=1$, 
we have
$$
|S_{p}(\lambda,\brho,\be;\cB)| \le  h^{o(1)}\cdot
 \left\{  \begin{array}{ll}
h^{2} p^{1/2} +     h^{4}p^{-1/2} & \quad \text{if}\  n=4, \\
h^{5/2} p^{1/2} +     h^{4}p^{1/8} + h^{5}p^{-3/8} & \quad \text{if}\  n=5, \\
h^{3} p^{1/2} +     h^{6}p^{-1/4}& \quad \text{if}\ n=6,\\
  h^{11/2}p^{1/4} + h^{7}p^{-1/8}
& \quad \text{if}\ n=7.
\end{array} \right.
$$
\end{theorem}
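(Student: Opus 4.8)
The plan is to bound $|S_p(\lambda,\brho,\be;\cB)|$ by separating the variables into two groups and applying the Cauchy--Schwarz (or Hölder) inequality to free the exponential sum over one group while expressing the remaining sum as a character/congruence count. Concretely, I would first fix all but one variable, say $x_n$, and write the sum as a single incomplete exponential sum over $x_n$ weighted by $\rho_n(x_n)\ep(\lambda m x_n^{e_n})$, where $m = x_1^{e_1}\cdots x_{n-1}^{e_{n-1}}$ depends on the remaining $n-1$ variables. Completing this sum over the short interval via the standard reduction to complete sums, and then applying the Weil bound to the complete exponential sum, produces a bound in terms of the multiplicity with which each residue $m$ is hit. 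Thus the heart of the argument is to control how often the monomial $x_1^{e_1}\cdots x_{n-1}^{e_{n-1}}$ takes a given value modulo $p$ as the $(n-1)$ variables range over the box, which is exactly the quantity $J_{p,\nu}$ (hence $I_{p,\nu}$) counted by Lemmas~\ref{lem:GProdSet}--\ref{lem:Kss}.

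\medskip

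First I would apply Cauchy--Schwarz (for $n=4,6$) or Hölder with a higher moment (for $n=5,7$, where the exponent pattern $h^{5/2}p^{1/2}$ and $h^{11/2}p^{1/4}$ suggests a cube/fourth power is taken on the exponential-sum side) to isolate $|S_p|^{2r}$ and push the weights $\rho_j$ and the unknown phases onto an outer sum. After opening the $2r$-th power and extending the $x_n$-summation to a complete sum modulo $p$, orthogonality collapses the inner sum onto a congruence of the shape $m_1 z_1^{e_n} + \dots \equiv m_2 w_1^{e_n}+\dots$, whose solution count is governed by $J_{p,n-1}(\be',\vec h,\vec k)$ for the truncated exponent vector $\be'=(e_1,\dots,e_{n-1})$. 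By Lemma~\ref{lem:Kss} this reduces to $I_{p,n-1}(h,k_j)$, and Lemma~\ref{lem:GProdSet} with $\nu=n-1$ supplies the two-term estimate $(h^{\nu}+h^{2\nu}p^{-\nu/d_\nu})h^{o(1)}$. The specific numbers $d_3=4$, $d_4=6$, $d_5=12$, $d_6=18$ coming from $d_\nu=\max\{\nu^2-2\nu-2,\nu^2-3\nu+4\}$ are precisely what will convert into the exponents $p^{-1/2},p^{-3/8},p^{-1/4},p^{-1/8}$ displayed in the four cases.

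\medskip

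The remaining step is bookkeeping: combining the Weil main term (size $p^{1/2}$ per free variable, or the completed-sum error $h p^{1/2}$ type contribution) with the congruence count, then taking the $2r$-th root and multiplying through by the trivial $h^{o(1)}$ losses from completing intervals and from the $\gcd(e_j,p-1)\le|e_j|$ collapses. In each dimension $n$ one balances the ``diagonal'' term (where the monomials coincide, contributing $h^{n-1}$ solutions, hence the $h^{(n-1)/2}p^{1/2}$ or analogous pieces) against the ``off-diagonal'' term (contributing $h^{2(n-1)}p^{-(n-1)/d_{n-1}}$, hence the negative-power-of-$p$ pieces); the intermediate mixed term $h^{4}p^{1/8}$ for $n=5$ arises exactly when neither term dominates uniformly and both survive after the root.

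\medskip

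The main obstacle I anticipate is getting the arithmetic of the exponents to close correctly across all four values of $n$ simultaneously, since the choice of moment $2r$ (and hence whether one uses Cauchy--Schwarz or Hölder) must be matched to $d_{n-1}$ so that the completed-sum gain and the congruence saving align to produce a genuinely nontrivial bound below $h^n$. For $n\ge 8$ this matching fails because $d_{n-1}$ grows quadratically while the available savings do not keep pace, which is precisely why Lemma~\ref{lem:GProdSet} ceases to yield anything nontrivial and the theorem is restricted to $4\le n\le 7$; verifying the boundary of this range, and confirming that the $n=7$ case still clears the trivial bound, is the delicate endpoint of the computation.
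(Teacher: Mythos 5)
There is a genuine gap: your $(n-1)+1$ decomposition cannot produce the exponents stated in the theorem. The paper instead splits the variables into two \emph{nearly equal} groups of sizes $s=\fl{n/2}$ and $t=\rf{n/2}$, writes $S_{p}(\lambda,\brho,\be;\cB)=\sum_{u,v}\eta_1(u)\eta_2(v)\ep(\lambda uv)$ where $\eta_i$ counts (with weights) the representations of $u$, resp.\ $v$, by each half of the monomial, and applies the standard bilinear bound $|S_p|\le\(p\sum_u\eta_1(u)^2\sum_v\eta_2(v)^2\)^{1/2}$. No Weil bound, no completion of an incomplete sum, and no higher moments are used: a single Cauchy--Schwarz suffices, and the crucial point is that Lemmas~\ref{lem:GProdSet} and~\ref{lem:Kss} are applied \emph{twice}, once with $\nu=s$ and once with $\nu=t$, so the savings multiply to give the off-diagonal term $h^{2n}p^{1-s/d_s-t/d_t}$ inside the square root. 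Since $s,t\le 4$ for $n\le 7$, only the strong small-$\nu$ exponents $2/d_2=1$, $3/d_3=3/4$, $4/d_4=1/2$ ever enter; e.g.\ for $n=4$ one gets $p^{(1-1-1)/2}=p^{-1/2}$, i.e.\ the term $h^4p^{-1/2}$. Your route isolates one variable and can invoke the congruence count only once, with $\nu=n-1$, whose exponent $(n-1)/d_{n-1}$ is far weaker (for $n=4$ it is $3/4$ instead of the combined $2$); even after optimizing the H\"older moment $r$ the off-diagonal contribution is of the shape $h^{n}p^{(1/2-(n-1)/d_{n-1})/(2r)}$, which for $n=4$ gives at best $h^4p^{-1/(8r)}$ and never $h^4p^{-1/2}$. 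Your scheme is essentially the one the paper reserves for the character sums in Theorem~\ref{thm:Char-2}, where the bilinear trick is unavailable, and the paper explicitly observes that its $r=1$ instance is always weaker than the bilinear bound.

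Two further factual errors compound this. First, your values of $d_\nu$ are wrong: from $d_\nu=\max\{\nu^2-2\nu-2,\nu^2-3\nu+4\}$ one gets $d_4=8$, $d_5=14$, $d_6=22$, not $6$, $12$, $18$. Second, the displayed exponents $p^{-1/2},p^{-3/8},p^{-1/4},p^{-1/8}$ do not come from $d_{n-1}$ at all; they are $p^{(1-s/d_s-t/d_t)/2}$ for the balanced split, and the intermediate term $h^4p^{1/8}$ at $n=5$ is simply the surviving cross term $h^{(n+t)/2}p^{(1-t/d_t)/2}$ arising because $s\ne t$, not an artifact of a moment computation. To prove the theorem as stated you should abandon the single-variable extraction and Weil bound, and instead set up the balanced bilinear form above.
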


\begin{proof} Let $s = \fl{n/2}$ and $t = n-s = \rf{n/2}$.
We define 
\begin{equation*}
\begin{split}
\cB_1 & =  [k_1+1, k_1+h]\times \ldots \times[k_s+1, k_s+h], \\
\cB_2 & =  [k_{s+1}+1, k_{s+1}+h]\times  \ldots \times [k_n+1, k_n+h], 
\end{split}
\end{equation*}
and write 
\begin{equation*}
\begin{split}
\eta_1(u) & =  \sum_{\substack{(x_1, \ldots, x_s) \in \cB_1\\ x_1^{e_1}\ldots x_s^{e_s}\equiv  u \pmod p}} \rho_1(x_1) \ldots  \rho_s(x_s),\\
\eta_2(v)  & =  \sum_{\substack{(x_{s+1}, \ldots, x_n) \in \cB_2\\ x_{s+1}^{e_{s+1}}\ldots x_n^{e_n}\equiv  v \pmod p}}
\rho_{s+1}(x_{s+1}) \ldots  \rho_n(x_n).
\end{split}
\end{equation*}
Therefore
\begin{equation}
\label{eq:S eta}
S_{p}(\lambda,\brho,\be;\cB) = \sum_{u, v=1}^{p-1} \eta_1(u) \eta_2(v) \ep(\lambda u v). 
\end{equation}
Recalling~\eqref{eq:rho} and using Lemmas~\ref{lem:GProdSet} and~\ref{lem:Kss}, we obtain
\begin{equation*}
\begin{split}
\sum_{u =1}^{p-1} \eta_1(u)^2 & \le\(h^{s} +  h^{2s} p^{-s/d_s} \) h^{o(1)}, \\
\sum_{v=1}^{p-1} \eta_2(v)^2 & \le\(h^{t} +  h^{2t} p^{-t/d_t} \) h^{o(1)},
\end{split} 
\end{equation*} 
where the integers $d_\nu$ are defined in Section~\ref{sec:cong}.

We now use the standard method of estimating bilinear sums via the Cauchy 
inequality, see, for example,~\cite[Lemma~4.1]{Gar} and derive  from~\eqref{eq:S eta} that
\begin{equation}
\label{eq:prelim}
\begin{split}
|S_{p}&(\lambda,\brho,\be;\cB)| \le \(p \sum_{u =1}^{p-1} \eta_1(u)^2   \sum_{v=1}^{p-1} \eta_2(v)^2\)^{1/2} \\
  & \le  p^{1/2}  \(h^s +  h^{2s} p^{-s/d_s}\)^{1/2}  \(h^t + h^{2t} p^{-t/d_t} \)^{1/2} h^{o(1)}\\
 & = p^{1/2} \(h^n + h^{n+s}p^{-s/d_s} +  h^{n+t}p^{-t/d_t}
  + h^{2n}p^{-s/d_s-t/d_t}\)^{1/2}  h^{o(1)}.
\end{split}
\end{equation}
Note that for $n \le 7$ we have $s \le t \le 4$  
and also 
$$
\frac{2}{d_2} =1 > \frac{3}{d_3} = \frac{3}{4}  > \frac{4}{d_4} = \frac{1}{2}.
$$
Thus for $n \le 7$ we have $h^{s}p^{-s/d_s}\le  h^{t}p^{-t/d_t}$ 
and the  bound~\eqref{eq:prelim} simplifies as
$$
|S_{p}(\lambda,\brho,\be;\cB)| \le  
p^{1/2} \(h^n +   h^{n+t}p^{-t/d_t}+ h^{2n}p^{-s/d_s-t/d_t}\)^{1/2}  h^{o(1)}.
$$

Now, for $n = 4$ we obtain 
\begin{equation}
\label{eq:n4}
\begin{split}
|S_{p}(\lambda,\brho&,\be;\cB)|  \le  
p^{1/2} \(h^4 +   h^{6}p^{-1}+ h^{8}p^{-2}\)^{1/2}  h^{o(1)}\\
&  \le  
p^{1/2} \(h^4 +     h^{8}p^{-2}\)^{1/2}  h^{o(1)} = 
h^{2+o(1)} p^{1/2} +     h^{4+o(1)}p^{-1/2}, 
\end{split}
\end{equation}
as the middle term never dominates.

Similarly, for $n = 5$, we obtain 
\begin{equation}
\label{eq:n5}
\begin{split}
|S_{p}(\lambda,\brho,\be;\cB)| & \le  
p^{1/2} \(h^5 +   h^{8}p^{-3/4}+ h^{10}p^{-7/4}\)^{1/2}  h^{o(1)}\\
&  \le  
h^{5/2+o(1)} p^{1/2} +     h^{4+o(1)}p^{1/8} + h^{5+o(1)}p^{-3/8} .
\end{split}
\end{equation}

For $n=6$, we see again that the middle term never dominates, so
\begin{equation}
\label{eq:n6}
\begin{split}
|S_{p}(\lambda,\brho,\be;\cB)| & \le  
p^{1/2} \(h^6 +    h^{12}p^{-3/2}\)^{1/2}  h^{o(1)}\\
&  = 
h^{3+o(1)} p^{1/2} +   h^{6+o(1)}p^{-1/4} .
\end{split}
\end{equation}

Finally, for $n=7$, we derive
\begin{equation*}
\begin{split}
|S_{p}(\lambda,\brho,\be;\cB)| & \le  
p^{1/2} \(h^7 +   h^{11}p^{-1/2}+ h^{14}p^{-5/4}\)^{1/2}  h^{o(1)}\\
&  \le  
h^{7/2+o(1)} p^{1/2} +     h^{11/2+o(1)}p^{1/4} + h^{7+o(1)}p^{-1/8} .
\end{split}
\end{equation*}
Note that for $h\le p^{1/6}$ we have $h^{11/2}p^{1/4}\le h^7$, thus the above
bound is trivial. On the other hand, for $h> p^{1/6}$ we have
$h^{7/2} p^{1/2}<h^{11/2}p^{1/4}$. Hence, we derive
\begin{equation}
\label{eq:n7}
|S_{p}(\lambda,\brho,\be;\cB)|  \le h^{11/2+o(1)}p^{1/4} + h^{7+o(1)}p^{-1/8} .
\end{equation}

Collecting the bounds~\eqref{eq:n4}, \eqref{eq:n5}, \eqref{eq:n6}
and~\eqref{eq:n7} we obtain the desired result. 
\end{proof}

It is easy to see that Theorem~\ref{thm:Kloost}  is nontrivial 
for any $\varepsilon>0$ and 
\begin{equation}
\label{eq:triv}
h \ge 
 \left\{  \begin{array}{ll}
p^{1/4+\varepsilon}  & \quad \text{if}\  n=4, \\
p^{1/5+\varepsilon} & \quad \text{if}\  n=5, \\
p^{1/6+\varepsilon} & \quad \text{if}\ n=6,\\
p^{1/6+\varepsilon} & \quad \text{if}\ n=7.
\end{array} \right.
\end{equation}

It is easy to see that a full analogue of Theorem~\ref{thm:Kloost} 
also holds for the sums $T_{p}(\lambda, \chi, \brho,\be;\cB)$.  

\begin{theorem}
\label{thm:Char-1}  
For  any  prime  $p$,   integer  $h \le p$, cube $\cB$ of the form~\eqref{eq:Box},
weights   $\brho$ with~\eqref{eq:rho}, vector  $\be$  with~\eqref{eq:ej}, 
nontrivial multiplicative characters $\chi$ modulo $p$,
and integer $\lambda$ with $\gcd(\lambda,p)=1$, 
we have
$$
|T_{p}(\lambda, \chi, \brho,\be;\cB)|  \le  h^{o(1)}\cdot
 \left\{  \begin{array}{ll}
h^{2} p^{1/2} +     h^{4}p^{-1/2} & \quad \text{if}\  n=4, \\
h^{5/2} p^{1/2} +     h^{4}p^{1/8} + h^{5}p^{-3/8} & \quad \text{if}\  n=5, \\
h^{3} p^{1/2} +     h^{6}p^{-1/4}& \quad \text{if}\ n=6,\\
 h^{11/2}p^{1/4} + h^{7}p^{-1/8}
& \quad \text{if}\ n=7.
\end{array} \right.
$$
\end{theorem}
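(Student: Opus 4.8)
The plan is to repeat the proof of Theorem~\ref{thm:Kloost} almost verbatim, the only change being the bilinear estimate that replaces the additive orthogonality. Keeping $s=\fl{n/2}$ and $t=\rf{n/2}$, and defining the boxes $\cB_1,\cB_2$ and the weights $\eta_1(u),\eta_2(v)$ exactly as there, I note that on the range of summation $x_1^{e_1}\ldots x_n^{e_n}\equiv uv\pmod p$, so that
\[
T_{p}(\lambda,\chi,\brho,\be;\cB)=\sum_{u,v=1}^{p-1}\eta_1(u)\eta_2(v)\,\chi(uv+\lambda).
\]
The $L^2$ estimates coming from Lemmas~\ref{lem:GProdSet} and~\ref{lem:Kss}, namely $\sum_{u}|\eta_1(u)|^2\le(h^{s}+h^{2s}p^{-s/d_s})h^{o(1)}$ and $\sum_{v}|\eta_2(v)|^2\le(h^{t}+h^{2t}p^{-t/d_t})h^{o(1)}$, carry over unchanged, and trivially $\sum_v|\eta_2(v)|\le h^{t}$.

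The heart of the matter is to produce, for the kernel $\chi(uv+\lambda)$, a bilinear bound as strong as the one the factor $p$ gives in the exponential case. First I would apply the Cauchy--Schwarz inequality in $u$,
\[
|T_{p}(\lambda,\chi,\brho,\be;\cB)|^2\le\Big(\sum_{u=1}^{p-1}|\eta_1(u)|^2\Big)\sum_{u=1}^{p-1}\Big|\sum_{v=1}^{p-1}\eta_2(v)\chi(uv+\lambda)\Big|^2,
\]
and then expand the inner second moment. Since $x_1\ldots x_n\not\equiv0$ forces $u,v\not\equiv0$, the diagonal terms $v=v'$ contribute $(p-2)\sum_v|\eta_2(v)|^2$, which is exactly the clean factor $p$ produced by additive orthogonality in Theorem~\ref{thm:Kloost}. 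For $v\ne v'$ one has $\chi(uv+\lambda)\overline{\chi(uv'+\lambda)}=\chi\!\left(\dfrac{uv+\lambda}{uv'+\lambda}\right)$, and because $\lambda(v-v')\ne0$ the map $u\mapsto(uv+\lambda)/(uv'+\lambda)$ is a non-degenerate fractional-linear transformation; completing the summation over $u$ and using that $\chi$ is nontrivial, this character sum is $O(1)$. Weighting by $\eta_2$ and summing, the off-diagonal contributes $O\big((\sum_v|\eta_2(v)|)^2\big)=O(h^{2t})$, so that
\[
\sum_{u=1}^{p-1}\Big|\sum_{v=1}^{p-1}\eta_2(v)\chi(uv+\lambda)\Big|^2\le p\sum_{v=1}^{p-1}|\eta_2(v)|^2+O(h^{2t}).
\]

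Combining, the main term $p\big(\sum_u|\eta_1(u)|^2\big)\big(\sum_v|\eta_2(v)|^2\big)$ is exactly the square of the right-hand side of~\eqref{eq:prelim}, whose square root is the right-hand side of Theorem~\ref{thm:Kloost}. The only genuinely new contribution is $\big(\sum_u|\eta_1(u)|^2\big)\cdot O(h^{2t})\le(h^{s+2t}+h^{2n}p^{-s/d_s})h^{o(1)}$. Since $t/d_t\le1$ for $t\le4$, the term $h^{2n}p^{-s/d_s}$ is dominated by the main-term contribution $h^{2n}p^{1-s/d_s-t/d_t}$, while a direct check for each $n\in\{4,5,6,7\}$ shows that after taking square roots the remaining pieces $h^{(s+2t)/2}$ and $h^{n}p^{-s/(2d_s)}$ are majorised by the displayed terms (for instance $h^{(s+2t)/2}=h^{11/2}$ when $n=7$, already present). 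Hence $|T_{p}(\lambda,\chi,\brho,\be;\cB)|$ satisfies the same bound as $|S_{p}(\lambda,\brho,\be;\cB)|$, as claimed.

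The step I expect to be the crux is the off-diagonal estimate: what makes the multiplicative analogue lose nothing is that the relevant sum is over a fractional-linear function and is therefore $O(1)$, not the $O(p^{1/2})$ that a naive Weil count would suggest. With $O(p^{1/2})$ the off-diagonal would instead contribute a term such as $h^{3}p^{1/4}$ when $n=4$, strictly worse than the claimed $h^2p^{1/2}+h^4p^{-1/2}$; ruling this out, and then verifying the case-by-case domination above, is where the real work lies.
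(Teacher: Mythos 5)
Your argument is correct and is exactly the route the paper intends: Theorem~\ref{thm:Char-1} is stated without proof as a ``full analogue'' of Theorem~\ref{thm:Kloost}, and the only new ingredient is precisely your second-moment bound for the kernel $\chi(uv+\lambda)$, whose off-diagonal sums are indeed $O(1)$ after completing the sum over $u$ and using that the resulting fractional-linear argument parametrises $\F_p$ minus one point. One small simplification: since $\bigl(\sum_v|\eta_2(v)|\bigr)^2\le (p-1)\sum_v|\eta_2(v)|^2$ by Cauchy--Schwarz, the off-diagonal contribution $O(h^{2t})$ is always absorbed into the diagonal term $p\sum_v|\eta_2(v)|^2$, so the bilinear bound of~\eqref{eq:prelim} carries over verbatim and your case-by-case domination check is unnecessary.
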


We now show that for sums of multiplicative characters one can 
derive yet another estimate.

\begin{theorem}
\label{thm:Char-2}  
For  any  prime  $p$,   integer  $h \le p$, cube $\cB$ of the form~\eqref{eq:Box},
weights   $\brho$ with~\eqref{eq:rho}, vector  $\be$  with~\eqref{eq:ej}, 
nontrivial multiplicative characters $\chi$ modulo $p$,
and integer $\lambda$ with $\gcd(\lambda,p)=1$, 
\begin{itemize}
\item if $n=3$ then 
$$
|T_{p}(\lambda, \chi, \brho,\be;\cB)|  \le h^{o(1)}\cdot
 \left\{  \begin{array}{ll}
 h^{5/2} & \ \text{if}\  p> h \ge p^{1/2}, \\
h^{3/2}p^{1/2} & \ \text{if}\  p^{1/2} > h \ge p^{3/8}, \\
h^{5/2}p^{1/8} & \ \text{if}\  p^{3/8}> h \ge p^{1/4},
\end{array} \right.
$$

\item if $n=4$ then 
$$
|T_{p}(\lambda, \chi, \brho,\be;\cB)|  \le h^{o(1)}\cdot
 \left\{  \begin{array}{ll}
  h^{4}p^{-1/4} & \ \text{if}\  p> h \ge p^{1/2}, \\
 h^{2}p^{1/2} & \ \text{if}\  p^{1/2}> h \ge p^{9/32}, \\
h^{4}p^{-1/16} & \ \text{if}\  p^{9/32} > h \ge p^{1/4}, \\
h^{11/4}p^{1/4} & \ \text{if}\  p^{1/4}> h \ge p^{2/9},\\
h^{7/2}p^{1/12} & \ \text{if}\  p^{2/9}> h \ge p^{1/6}.
\end{array} \right.
$$
\end{itemize}
\end{theorem}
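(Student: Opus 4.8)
The plan is to reduce $|T_{p}(\lambda, \chi, \brho,\be;\cB)|$ to a product of a multiplicative energy in all but one variable and a high moment of a one-dimensional character sum. For $n=3$ I would peel off $x_3$ and for $n=4$ the variable $x_4$; writing $\cB'$ for the corresponding $(n-1)$-dimensional face of $\cB$ and $w$ for the residue of the monomial in the remaining variables, set
\[
\eta(w) = \sum_{\substack{(x_1,\ldots,x_{n-1})\in \cB'\\ x_1^{e_1}\ldots x_{n-1}^{e_{n-1}}\equiv w}} \rho_1(x_1)\ldots \rho_{n-1}(x_{n-1}), \qquad \Sigma(w) = \sum_{x_n} \rho_n(x_n)\,\chi(w x_n^{e_n}+\lambda),
\]
so that $T_{p}(\lambda, \chi, \brho,\be;\cB) = \sum_{w=1}^{p-1}\eta(w)\Sigma(w)$.

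Two estimates feed the argument. First, $\sum_w|\eta(w)|^2$ is bounded by the count $J_{p,n-1}(\be,\vec h,\vec k)$ of Section~\ref{sec:cong}, so Lemma~\ref{lem:Kss} together with Lemma~\ref{lem:GProdSet} (used with $\nu=2$, $d_2=2$ when $n=3$, and with $\nu=3$, $d_3=4$ when $n=4$) gives
\[
\sum_{w=1}^{p-1} |\eta(w)|^2 \le \(h^{n-1} + h^{2(n-1)}p^{-(n-1)/d_{n-1}}\)h^{o(1)},
\]
while trivially $\sum_w|\eta(w)| \le h^{n-1}$. Second, after extending the summation over $w$ to all residues, Lemma~\ref{lem:DavErd} controls the moments $\sum_{u=1}^{p-1}|\Sigma(u)|^{2r}$, bounding them by $hp$ when $r=1$ and by $h^{r}p+h^{2r}p^{1/2}$ when $r\ge 2$.

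With these inputs I would apply Hölder's inequality with exponents $2r/(2r-1)$ and $2r$, interpolate $\sum_w|\eta(w)|^{2r/(2r-1)}$ between the $\ell^1$ and $\ell^2$ bounds just recorded, and optimise over $r$. For $n=3$, the case $r=1$ (Cauchy--Schwarz) yields $\(\sum_w|\eta|^2\)^{1/2}(hp)^{1/2}\ll \(h^{3/2}p^{1/2}+h^{5/2}\)h^{o(1)}$, which splits at $h=p^{1/2}$ into the first two lines, and $r=2$ yields the third line $h^{5/2}p^{1/8}$ once $h\ge p^{1/4}$; comparing the two candidates fixes the crossover at $h=p^{3/8}$. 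For $n=4$, the same scheme with $r=2$ produces $h^{11/4}p^{1/4}$ or $h^4p^{-1/16}$ according as the $h^{n-1}$ or the $h^{2(n-1)}p^{-\cdots}$ term of the energy dominates, i.e.\ according as $h\le p^{1/4}$ or $h\ge p^{1/4}$, while $r=3$ produces $h^{7/2}p^{1/12}$; the crossover between the last two is at $h=p^{2/9}$, and $h^4 p^{-1/16}$ is reported only up to $h=p^{9/32}$. In the remaining range $h\ge p^{9/32}$ this method is weaker than the bilinear estimate, so there I would simply quote Theorem~\ref{thm:Char-1}, whose $n=4$ bound $h^{2}p^{1/2}+h^{4}p^{-1/2}$ already implies the first two lines.

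The single genuine technical issue is that Lemma~\ref{lem:DavErd} is stated for the linear argument $\chi(ux+\lambda)$, whereas $\Sigma(u)$ carries the monomial $\chi(u x_n^{e_n}+\lambda)$. This extension is routine and I would dispatch it first: expanding $\sum_{u=1}^{p-1}|\Sigma(u)|^{2r}$ and summing in $u$ produces complete sums $\sum_{u}\chi(R(u))$ with $R$ a rational function of degree $O(r)$, to which the Weil bound applies unless $R$ is proportional to a perfect power; the diagonal contribution is governed by the number of solutions of $x^{e_n}\equiv y^{e_n}\pmod p$ with $x,y$ in an interval of length $h<p$, which is $\ll h$ with an implied constant depending only on $e_n$. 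Hence the bounds of Lemma~\ref{lem:DavErd} persist for the monomial, with constants depending on $\be$ as the conventions permit. After that, everything reduces to the bookkeeping of comparing the finitely many candidate bounds and verifying that the envelope is continuous at the thresholds $p^{1/2},p^{3/8},p^{1/4}$ (for $n=3$) and $p^{9/32},p^{1/4},p^{2/9},p^{1/6}$ (for $n=4$); I expect this optimisation, rather than any single estimate, to be the most error-prone part.
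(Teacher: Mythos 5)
Your proposal follows essentially the same route as the paper: peel off the last variable, bound the energy of $\eta_0$ via Lemmas~\ref{lem:Kss} and~\ref{lem:GProdSet}, apply the same H\"older inequality (your $\ell^1$--$\ell^2$ interpolation of $\sum_w|\eta(w)|^{2r/(2r-1)}$ is exactly the paper's splitting $\eta_0=(\eta_0^2)^{1/2r}\eta_0^{(r-1)/r}$), invoke Lemma~\ref{lem:DavErd}, and optimise over $r\in\{1,2,3\}$ with the same crossover points, falling back on Theorem~\ref{thm:Char-1} for $n=4$ and $h\ge p^{9/32}$. You are in fact slightly more careful than the paper, which silently writes $\chi(ux+\lambda)$ where $\chi(ux_n^{e_n}+\lambda)$ is meant; your sketched extension of Lemma~\ref{lem:DavErd} to that case is the standard Weil-bound argument and is correct.
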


\begin{proof}  
We define 
$$
\cB_0   =  [k_1+1, k_1+h]\times \ldots \times[k_{n-1}+1, k_{n-1}+h] 
$$
and write 
$$
\eta_0(u)  =  \sum_{\substack{(x_1, \ldots, x_{n-1}) \in \cB_0\\ 
x_1^{e_1}\ldots x_{n-1}^{e_{n-1}}\equiv  u \pmod p}} \rho_1(x_1) \ldots  \rho_{n-1}(x_{n-1}).
$$
Therefore
\begin{equation*}
\begin{split}
T_{p}(\lambda, \chi, \brho,\be;\cB) &= \sum_{u=1}^{p-1} \eta_0(u) \sum_{x=k_n+1}^{k_n +h}\rho_n(x_n)\chi (ux +\lambda )\\
&= \sum_{u=1}^{p-1} \(\eta_0(u)^2\)^{1/2r}  \(\eta_0(u)\)^{(r-1)/r}\sum_{x=k_n+1}^{k_n +h}\rho_n(x)\chi (ux +\lambda ) . 
\end{split} 
\end{equation*}
Thus, by the H{\"o}lder inequality 
\begin{equation}
\begin{split}
\label{eq:T prelim}
|T_{p}(\lambda, \chi, \brho,\be;\cB)|^{2r}   = 
\sum_{u=1}^{p-1} \eta_0(u)^2  &\(\sum_{u=1}^{p-1} \eta_0(u)\)^{2r-2} \\
& \sum_{u=1}^{p-1} \left|\sum_{x=k_n+1}^{k_n +h}\rho_n(x_n)\chi (ux +\lambda )\right|^{2r}.
\end{split} 
\end{equation}
Clearly
$$
\sum_{u=1}^{p-1} \eta_0(u) = h^{n-1}.
$$
Recalling~\eqref{eq:rho} and using Lemmas~\ref{lem:GProdSet} and~\ref{lem:Kss}, we obtain
$$
\sum_{u =1}^{p-1} \eta_0(u)  \le\(h^{n-1} +  h^{2n-2} p^{-(n-1)/d_{n-1}} \) h^{o(1)}.
$$
Substituting the above bounds in~\eqref{eq:T prelim}, and using Lemma~\ref{lem:DavErd}
with $r=1$ we obtain
\begin{equation*}
\begin{split}
|T_{p}(\lambda, \chi, \brho,\be;\cB)|^{2} & \le 
\(h^{n-1} +  h^{2n-2} p^{-(n-1)/d_{n-1}} \) h^{1+o(1)}p \\
& \le \(h^{n}p +  h^{2n-1}p^{1-(n-1)/d_{n-1}}\) h^{o(1)}.
\end{split} 
\end{equation*}
Similarly, we derive from Lemma~\ref{lem:DavErd} that for $r =2,3,\ldots$
we have  
\begin{equation*}
\begin{split}
|T_{p}(\lambda, \chi&, \brho,\be;\cB)|^{2r} \le 
\(h^{n-1} +  h^{2n-2} p^{-(n-1)/d_{n-1}} \) h^{(n-1)(2r-2)+o(1)} \\
& \qquad\qquad\qquad\qquad\qquad\qquad\qquad \qquad\qquad \qquad \(h^r p +  h^{2r}p^{1/2}\) \\
& = \( h^{(n-1)(2r-1)}  +  h^{2(n-1)r} p^{-(n-1)/d_{n-1}} \)  
 \(h^r p +  h^{2r}p^{1/2}\) h^{o(1)}\\
 & = (  h^{2nr -n - r +1} p +  h^{(2n-1)r} p^{1-(n-1)/d_{n-1}}  
 \\
 & \qquad\qquad \qquad\qquad+  h^{2nr -n +1}  p^{1/2} 
 +h^{2nr} p^{1/2 -(n-1)/d_{n-1}})  h^{o(1)}.
\end{split} 
\end{equation*}

Now, for $n = 3$ we obtain 
\begin{equation}
\label{eq:n3char r1}
\begin{split}
|T_{p}(\lambda, \chi, \brho,\be;\cB)|^2 \le    
\(h^{3}p +  h^{5}\) h^{o(1)}
\end{split} 
\end{equation}
if $r=1$, while for $r =2,3,\ldots$ we have
\begin{equation}
\label{eq:n3char r2}
\begin{split}
|T_{p}(\lambda, \chi, \brho,\be;\cB)|^{2r}     
 & \le \(  h^{5r -2} p +  h^{5r} +  h^{6r -2}  p^{1/2} 
 +h^{6r} p^{-1/2}\) h^{o(1)}\\
  & \le \(  h^{5r -2} p +  h^{6r -2}  p^{1/2} 
 +h^{6r} p^{-1/2}\) h^{o(1)}, 
\end{split} 
\end{equation}
as the second term never dominates.
We now use~\eqref{eq:n3char r1} for $h \ge p^{3/8}$ and 
use~\eqref{eq:n3char r2} with $r = 2$ for  $h \ge p^{3/8}$,
getting the desired result for $n = 3$.

Similarly, for $n = 4$ we obtain 
\begin{equation}
\label{eq:n4char r1}
|T_{p}(\lambda, \chi, \brho,\be;\cB)|^{2}     
\le  \(h^4p +  h^{7}p^{1/4}\) h^{o(1)}
 h^{o(1)} 
\end{equation}
if $r=1$, while for $r =2,3,\ldots$ we obtain 
\begin{equation}
\label{eq:n4char r2}
|T_{p}(\lambda, \chi, \brho,\be;\cB)|^{2r}     
\le \(  h^{7r -3} p +  h^{7r} p^{1/4}  +  h^{8r - 3}  p^{1/2} 
 +h^{8r} p^{-1/4}\) h^{o(1)}. 
\end{equation}
We now notice that~\eqref{eq:n4char r1} is always weaker that the bound
of Theorem~\ref{thm:Char-1}  (for $n=4$), 
which we use for $h \ge p^{9/32}$.
We also use~\eqref{eq:n4char r2} with $r = 3$ for  $h< p^{2/9}$
we also use~\eqref{eq:n4char r2} with $r = 2$ for  
$p^{2/9} \le h < p^{9/32}$,
getting the desired result for $n = 4$. 
\end{proof}

Clearly,  Theorem~\ref{thm:Char-1}  is nontrivial  under the
condition~\eqref{eq:triv}, while  Theorem~\ref{thm:Char-2} 
is nontrivial for 
$$
h \ge 
 \left\{  \begin{array}{ll}
p^{1/4+\varepsilon}  & \quad \text{if}\  n=3,  \\
p^{1/6+\varepsilon} & \quad \text{if}\ n=4.
\end{array} \right.
$$

\section{Bounds for almost all primes}

\begin{theorem}
\label{thm:Kloost AA}  
For a sufficiently large positive integer  $T$, $h\ge3$,
for all but $o(T/\log T)$ primes $p \le T$, uniformly 
over positive integers $h \le p$, cubes $\cB$ of the form~\eqref{eq:Box},
weights   $\brho$ with~\eqref{eq:rho}, vectors $\be$  with~\eqref{eq:ej}
and integers $\lambda$ with $\gcd(\lambda,p)=1$, 
we have
$$
|S_{p}(\lambda,\brho,\be;\cB)| \le   \(h^{n/2} p^{1/2} + h^{n/2+\rf{n/2}/2-1/4}p^{1/4}  + h^{n -1/2} \)  h^{o(1)}
$$
\end{theorem}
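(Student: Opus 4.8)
The plan is to mimic the proof of Theorem~\ref{thm:Kloost}, replacing the all-primes input Lemma~\ref{lem:GProdSet} by the almost-all-primes input Lemma~\ref{lem:GProdSet AlmostAll}. The key point is that Lemma~\ref{lem:GProdSet AlmostAll} gives a bound on $I_{p,\nu}(h,k)$ that holds uniformly for all $k$ and all $h<p$ simultaneously, outside an exceptional set of $o(T/\log T)$ primes, so after discarding that exceptional set all the subsequent estimates become valid for every choice of the remaining parameters at once.

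First I would split the cube as $\cB = \cB_1 \times \cB_2$ with $s = \fl{n/2}$ and $t = \rf{n/2}$ variables respectively, and introduce the weighted counting functions $\eta_1(u)$ and $\eta_2(v)$ exactly as before, so that $S_{p}(\lambda,\brho,\be;\cB) = \sum_{u,v=1}^{p-1}\eta_1(u)\eta_2(v)\ep(\lambda uv)$. Using~\eqref{eq:rho} together with Lemmas~\ref{lem:GProdSet AlmostAll} and~\ref{lem:Kss}, the second-moment sums now satisfy
\begin{equation*}
\begin{split}
\sum_{u=1}^{p-1}\eta_1(u)^2 &\le \(h^{s} + h^{2s-1/2}p^{-1/2}\)h^{o(1)},\\
\sum_{v=1}^{p-1}\eta_2(v)^2 &\le \(h^{t} + h^{2t-1/2}p^{-1/2}\)h^{o(1)},
\end{split}
\end{equation*}
for all but $o(T/\log T)$ primes $p\le T$. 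Then the Cauchy–Schwarz step applied to the bilinear form, exactly as in~\eqref{eq:prelim}, yields
$$
|S_{p}(\lambda,\brho,\be;\cB)| \le p^{1/2}\(h^{s}+h^{2s-1/2}p^{-1/2}\)^{1/2}\(h^{t}+h^{2t-1/2}p^{-1/2}\)^{1/2}h^{o(1)}.
$$

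Expanding the product under the square root gives four terms: $h^{n}$, $h^{n+s-1/2}p^{-1/2}$, $h^{n+t-1/2}p^{-1/2}$, and $h^{2n-1}p^{-1}$. Since $s\le t$ we have $h^{s}\le h^{t}$, so the term $h^{n+s-1/2}p^{-1/2}$ is dominated by $h^{n+t-1/2}p^{-1/2}$ and may be dropped. Taking the outer $p^{1/2}$ and the square root of the three surviving terms produces $h^{n/2}p^{1/2}$, $h^{n/2+t/2-1/4}p^{1/4}$, and $h^{n-1/2}$, which with $t=\rf{n/2}$ is precisely the claimed bound. The main thing to get right is the uniformity: I would state explicitly at the outset that one fixes the exceptional set of primes coming from Lemma~\ref{lem:GProdSet AlmostAll}, and that because that lemma is uniform in $k$ and $h$, the resulting estimate holds simultaneously over all $\cB$, $\brho$, $\be$, and $\lambda$ as required. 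There is no genuine obstacle beyond this bookkeeping, since the cross term never dominates and the arithmetic simplification is routine.
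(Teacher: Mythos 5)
Your proposal follows the paper's proof essentially verbatim: the same split into $s=\fl{n/2}$ and $t=\rf{n/2}$ variables, the same second-moment bounds via Lemmas~\ref{lem:GProdSet AlmostAll} and~\ref{lem:Kss}, the same Cauchy--Schwarz step, and the same observation that the cross term with exponent $s$ is dominated. The explicit remark about fixing the exceptional set of primes first is a welcome clarification of a point the paper leaves implicit, but the argument is the same.
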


\begin{proof} Let $p$ be one of the non-exceptional primes for which 
the bound of  Lemma~\ref{lem:GProdSet AlmostAll} holds.

We set $s = \fl{n/2}$ and $t = n-s = \rf{n}$
and define $\eta_1(u)$ and $\eta_2(v)$ as in the proof of Theorem~\ref{thm:Kloost}. 
Recalling~\eqref{eq:rho} and using Lemmas~\ref{lem:GProdSet AlmostAll} and~\ref{lem:Kss}, we obtain
\begin{equation*}
\begin{split}
\sum_{u =1}^{p-1} \eta_1(u)^2 & \le \(h^s + h^{2s-1/2}p^{-1/2}\)h^{o(1)}, \\
\sum_{v=1}^{p-1} \eta_2(v)^2 & \le \(h^t + h^{2t-1/2}p^{-1/2}\)h^{o(1)}.
\end{split} 
\end{equation*}
We now apply the standard method of estimating bilinear sums via the Cauchy 
inequality, see, for example,~\cite[Lemma~4.1]{Gar} and derive from~\eqref{eq:S eta}
that 
\begin{equation*}
\begin{split}
|S_{p}(\lambda,\brho&,\be;\cB)| \le \(p \sum_{u =1}^{p-1} \eta_1(u)^2   \sum_{v=1}^{p-1} \eta_2(v)^2\)^{1/2} \\
  \le & p^{1/2}  \(h^s + h^{2s-1/2}p^{-1/2}\)^{1/2}  \(h^t + h^{2t-1/2}p^{-1/2}\)^{1/2} h^{o(1)}\\
 = & p^{1/2} \(h^n + h^{n+t-1/2}p^{-1/2} + h^{n+s-1/2}p^{-1/2} + h^{2n -1 }p^{-1}\)^{1/2}  h^{o(1)}\\
  = & p^{1/2} \(h^n + h^{n+t-1/2}p^{-1/2}  + h^{2n -1}p^{-1}\)^{1/2}  h^{o(1)}, 
\end{split}
\end{equation*}
and the result now follows. 
\end{proof}

It is easy to see that for an even $n$ the middle term in 
the bound of Theorem~\ref{thm:Kloost AA}  
never dominates and the bound simplifies as 
$$
|S_{p}(\lambda,\brho,\be;\cB)| \le   \(h^{n/2} p^{1/2} + h^{n -1/2} \)  h^{o(1)}.
$$

Clearly,  Theorem~\ref{thm:Kloost AA}  is nontrivial
provided that $h \ge p^{1/n + \varepsilon}$ for some fixed $\varepsilon > 0$.

Furthermore, as before, we also have an analogue of Theorem~\ref{thm:Kloost AA}
for the sums of multiplicative characters. 

\begin{theorem}
\label{thm:Char-1 AA}  
For a sufficiently large positive integer  $T$, $h\ge3$,
for all but $o(T/\log T)$ primes $p \le T$, uniformly 
over positive integers $h \le p$, cubes $\cB$ of the form~\eqref{eq:Box},
weights   $\brho$ with~\eqref{eq:rho}, vectors $\be$  with~\eqref{eq:ej}, 
nontrivial multiplicative characters $\chi$ modulo $p$,
and integers $\lambda$ with $\gcd(\lambda,p)=1$, 
we have
$$
|T_{p}(\lambda, \chi, \brho,\be;\cB)| \le   \(h^{n/2} p^{1/2} + h^{n/2+\rf{n/2}/2-1/4}p^{1/4}  + h^{n -1/2} \)  h^{o(1)}. 
$$
\end{theorem}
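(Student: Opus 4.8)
The plan is to transcribe the proof of Theorem~\ref{thm:Kloost AA}, the only genuinely new point being that the additive kernel $\ep(\lambda uv)$ is replaced by the multiplicative kernel $\chi(uv+\lambda)$ while one still retains the clean bilinear estimate
$$
|T_{p}(\lambda, \chi, \brho,\be;\cB)|^{2}\le p\sum_{u=1}^{p-1}\eta_1(u)^2\sum_{v=1}^{p-1}\eta_2(v)^2\cdot h^{o(1)}.
$$
First I would set $s=\fl{n/2}$, $t=\rf{n/2}$ and define $\eta_1(u)$, $\eta_2(v)$ exactly as in the proofs of Theorems~\ref{thm:Kloost} and~\ref{thm:Kloost AA}. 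Grouping the variables into $u=x_1^{e_1}\cdots x_s^{e_s}$ and $v=x_{s+1}^{e_{s+1}}\cdots x_n^{e_n}$, so that $x_1^{e_1}\cdots x_n^{e_n}=uv$, this yields the bilinear form
$$
T_{p}(\lambda, \chi, \brho,\be;\cB)=\sum_{u,v=1}^{p-1}\eta_1(u)\eta_2(v)\chi(uv+\lambda).
$$

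Next I would apply the Cauchy--Schwarz inequality in $u$, reducing matters to bounding $\Sigma=\sum_{u=1}^{p-1}\bigl|\sum_{v=1}^{p-1}\eta_2(v)\chi(uv+\lambda)\bigr|^2$. Completing the outer sum to all of $\F_p$ (which only adds a nonnegative term) and opening the square produces a diagonal contribution $(p-1)\sum_v\eta_2(v)^2$ together with off-diagonal terms weighted by the complete sums $\sum_{u\in\F_p}\chi(uv+\lambda)\overline\chi(uv'+\lambda)$ with $v\ne v'$. Since $\gcd(\lambda,p)=1$, for $v\ne v'$ the map $u\mapsto (uv+\lambda)/(uv'+\lambda)$ is a genuine M\"obius transformation (its determinant equals $\lambda(v-v')\ne0$), so this is a character sum of a non-constant rational function; the Weil bound --- precisely the input behind the Davenport--Erd{\H o}s estimate of Lemma~\ref{lem:DavErd} --- shows it is $O(1)$. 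Hence
$$
\Sigma\le p\sum_{v=1}^{p-1}\eta_2(v)^2+O\!\left(\Bigl(\sum_{v=1}^{p-1}|\eta_2(v)|\Bigr)^{2}\right).
$$

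The one routine check is that the off-diagonal term is harmless: from $\sum_v|\eta_2(v)|\le h^{t}$ and $h\le p$ one gets $(\sum_v|\eta_2(v)|)^2\le h^{2t}\le h^{2t-1/2}p^{1/2}$, and since Lemmas~\ref{lem:GProdSet AlmostAll} and~\ref{lem:Kss} give $\sum_v\eta_2(v)^2\le(h^t+h^{2t-1/2}p^{-1/2})h^{o(1)}$, the term $p\sum_v\eta_2(v)^2$ already contains $h^{2t-1/2}p^{1/2}h^{o(1)}$, which absorbs it. This establishes the displayed bilinear estimate, and with the same congruence bounds for $\sum_u\eta_1(u)^2$ and $\sum_v\eta_2(v)^2$ supplied by Lemma~\ref{lem:GProdSet AlmostAll} the remaining computation is word-for-word that of Theorem~\ref{thm:Kloost AA} (the cross term $h^{n+s-1/2}p^{-1/2}$ being dominated by $h^{n+t-1/2}p^{-1/2}$ as $s\le t$), yielding the claimed bound. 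The main obstacle is thus concentrated entirely in the off-diagonal estimate: one must confirm that the Weil bound delivers $O(1)$ rather than $O(p^{1/2})$ --- which hinges on the M\"obius structure --- and track the handful of residues where $uv+\lambda$ or $uv'+\lambda$ vanishes; everything else is a transcription of the additive-character argument.
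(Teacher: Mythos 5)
Your proposal is correct and follows exactly the route the paper intends: the paper states Theorem~\ref{thm:Char-1 AA} without proof as an ``analogue'' of Theorem~\ref{thm:Kloost AA}, and your argument is precisely that proof with the one genuinely new ingredient supplied --- namely that the bilinear bound survives the replacement of $\ep(\lambda uv)$ by $\chi(uv+\lambda)$ because the complete off-diagonal sums $\sum_{u}\chi(uv+\lambda)\overline\chi(uv'+\lambda)$ equal $-\chi(v/v')$ (the M\"obius substitution makes this exact, no appeal to Weil is even needed), so they are $O(1)$ and their total contribution is absorbed by the term $h^{2t-1/2}p^{1/2}$. Your bookkeeping of the cross terms and the final optimization then coincide verbatim with the proof of Theorem~\ref{thm:Kloost AA}, so the proposal is complete and correct.
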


Finally, using Lemma~\ref{lem:GProdSet AlmostAll} instead of 
Lemma~\ref{lem:GProdSet} in the proof of Theorem~\ref{thm:Char-2}, 
we obtain:

\begin{theorem}
\label{thm:Char-2 AA}  
For a sufficiently large positive integer  $T$, $h\ge3$,
for all but $o(T/\log T)$ primes $p \le T$, uniformly 
over positive integers $h \le p$, cubes $\cB$ of the form~\eqref{eq:Box},
weights   $\brho$ with~\eqref{eq:rho}, vectors $\be$  with~\eqref{eq:ej},
nontrivial multiplicative characters $\chi$ modulo $p$,
and integers $\lambda$ with $\gcd(\lambda,p)=1$, 
we have
\begin{equation*}
\begin{split}
|T_{p}(\lambda, \chi, \brho,\be;\cB)|    
&\le 
(h^{n-1/2 -(n-1)/2r} p^{1/2r} +  h^{n-1/2-1/4r}  p^{1/4r} \\
& \qquad \qquad \qquad \qquad + h^{n -(n-1)/2r}    p^{1/4r}
 +h^{n -1/4r})h^{o(1)}  \end{split} 
\end{equation*}
for  $r=2,3 \ldots$. 
\end{theorem}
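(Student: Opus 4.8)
The plan is to rerun the argument of Theorem~\ref{thm:Char-2} essentially verbatim, substituting the averaged multiplicative-energy estimate of Lemma~\ref{lem:GProdSet AlmostAll} for the pointwise one of Lemma~\ref{lem:GProdSet}. First I would fix a prime $p \le T$ that is non-exceptional for Lemma~\ref{lem:GProdSet AlmostAll} applied with $\nu = n-1$. Since that lemma is uniform in the integers $k$ and $h$, the $o(T/\log T)$ exceptional set depends only on $n$ (through $\nu$) and on $T$, and in particular \emph{not} on $\cB$, $\brho$, $\be$, $\chi$ or $\lambda$. This is the only point where genuine care is needed, and I regard it as the main (mild) obstacle: I must make sure the energy bound is the sole probabilistic input. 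Indeed, all the remaining ingredients (Lemmas~\ref{lem:Kss} and~\ref{lem:DavErd}) are deterministic and hold for every prime, so once $p$ is fixed outside the exceptional set the claimed uniformity over the secondary parameters is automatic.

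Next I would peel off the last variable exactly as in Theorem~\ref{thm:Char-2}: letting $\cB_0=[k_1+1,k_1+h]\times\cdots\times[k_{n-1}+1,k_{n-1}+h]$ and taking $\eta_0(u)$ to be the weighted number of $(x_1,\dots,x_{n-1})\in\cB_0$ with $x_1^{e_1}\cdots x_{n-1}^{e_{n-1}}\equiv u\pmod p$, I write $T_{p}(\lambda,\chi,\brho,\be;\cB)=\sum_u\eta_0(u)\sum_{x}\rho_n(x)\chi(ux+\lambda)$. Splitting $|\eta_0(u)|=|\eta_0(u)|^{1/r}\cdot|\eta_0(u)|^{(r-1)/r}$ and applying the H\"older inequality with exponents $2r$, $r/(r-1)$, $2r$ gives inequality~\eqref{eq:T prelim}, read with absolute values. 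Using $|\rho_j|\le 1$ from~\eqref{eq:rho}, I dominate $|\eta_0(u)|$ by the unweighted count, so that $\sum_u|\eta_0(u)|\le h^{n-1}$ and $\sum_u|\eta_0(u)|^2$ is at most the solution count $J_{p,n-1}$.

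The one genuinely new computation is the second moment. By Lemma~\ref{lem:Kss} the quantity $\sum_u|\eta_0(u)|^2$ is bounded by a product of factors $I_{p,n-1}(h,k_j)^{1/(n-1)}$, and feeding in Lemma~\ref{lem:GProdSet AlmostAll} (rather than Lemma~\ref{lem:GProdSet}) yields $\sum_u|\eta_0(u)|^2\le\bigl(h^{n-1}+h^{2n-5/2}p^{-1/2}\bigr)h^{o(1)}$, the exponent $2n-5/2=2(n-1)-1/2$ and the clean power $p^{-1/2}$ replacing the $h^{2n-2}$ and $p^{-(n-1)/d_{n-1}}$ of the all-primes case. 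For the character factor I would quote Lemma~\ref{lem:DavErd} in its $r\ge 2$ form, namely $\sum_u|\sum_x\rho_n(x)\chi(ux+\lambda)|^{2r}\ll h^r p+h^{2r}p^{1/2}$; only this subcase is needed, since the statement is for $r=2,3,\dots$.

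It then remains to assemble the three factors and take the $2r$-th root, which is pure bookkeeping. Multiplying the two-term second-moment bound (raised to the power $1$), the middle factor $(h^{n-1})^{2r-2}$, and the two-term character moment produces the four monomials
$$
h^{2nr-n-r+1}p,\quad h^{2nr-n+1}p^{1/2},\quad h^{(2n-1)r-1/2}p^{1/2},\quad h^{2nr-1/2},
$$
whose $2r$-th roots are, respectively, $h^{n-1/2-(n-1)/2r}p^{1/2r}$, $h^{n-(n-1)/2r}p^{1/4r}$, $h^{n-1/2-1/4r}p^{1/4r}$ and $h^{n-1/4r}$ — precisely the four terms in the statement. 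The only thing to watch is that, unlike the special cases $n=3,4$ of Theorem~\ref{thm:Char-2} where dominated terms were discarded, for general $n$ and $r$ none of the four monomials is uniformly negligible: which one dominates depends on the relations among $n$, $r$ and the ratio $\log h/\log p$. Hence all four must be retained, which is why the final bound is recorded as a four-term maximum.
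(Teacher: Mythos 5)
Your proposal is correct and is essentially the paper's own argument: the paper proves this theorem only by the one-line remark that one substitutes Lemma~\ref{lem:GProdSet AlmostAll} for Lemma~\ref{lem:GProdSet} in the proof of Theorem~\ref{thm:Char-2}, which is exactly what you do. Your four monomials $h^{2nr-n-r+1}p$, $h^{2nr-n+1}p^{1/2}$, $h^{(2n-1)r-1/2}p^{1/2}$, $h^{2nr-1/2}$ and their $2r$-th roots check out exactly against the four terms of the stated bound, and your observation that the exceptional set of primes comes only from Lemma~\ref{lem:GProdSet AlmostAll} (so uniformity in $\cB$, $\brho$, $\be$, $\chi$, $\lambda$ is automatic) is the right justification of the ``uniformly over'' clause.
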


Clearly one can also obtain a version of Theorem~\ref{thm:Char-2 AA}  with 
$r=1$, that is, 
$$
|T_{p}(\lambda, \chi, \brho,\be;\cB)|  \le 
(h^{n/2} p^{1/2} +  h^{n-3/4}  p^{1/4} )h^{o(1)}.  
$$
However it is always weaker than the bound of Theorem~\ref{thm:Char-1 AA}.

One easily verifies that Theorems~\ref{thm:Char-1 AA} and~\ref{thm:Char-2 AA}  are nontrivial
provided that $h \ge p^{1/n + \varepsilon}$ and $h \ge p^{1/2(n-1) + \varepsilon}$, 
respectively, for some fixed $\varepsilon > 0$.

\section{Comments}
\label{sec:com}

It is easy to see that the implied constants depend only on $\gcd(p-1,e_i)$
rather on $e_i$, $i =1, \ldots, n$. Thus our results remain nontrivial
even for very large values of $e_1,\ldots, e_n$, provided that the corresponding 
greatest common divisors are small. 

Furthermore, the method of this work can certainly be adjusted to 
apply to sums over more general boxes
$$
\cB = [k_1+1, k_1+h_1]\times  \ldots \times[k_n+1, k_n+h_n]
$$
with distinct sides. However finding an optimal choice of parameters
can be quite technically cluttered in this case. 

Clear the bounds of Theorems~\ref{thm:Kloost} and~\ref{thm:Char-1}  
and  of Theorem~\ref{thm:Char-2}  also apply to sums with 
$n \ge 8$ and $n \ge 5$ variables, respectively.  Simply, for each choice
of ``unused'' variables, we use one of these bounds. 
However, it is natural to expect that using more variables may
lead to stronger bounds over smaller cubes $\cB$. 
For example, one can try to split all variables into three groups 
and the replace the estimate~\eqref{eq:prelim} with an appropriate 
variant of the bound of
Bourgain and Garaev~\cite[Theorem~1.2]{BouGar1} of trilinear sums.
More precisely, one needs a bound on trilinear sums with weights 
bounded in $L_2$-norm rather than in $L_\infty$-norm as in~\cite{BouGar1},
see also~\cite{Bour1,Bour2}. 

\section*{Acknowledgements}

This work was finished during a very enjoyable 
stay of the author at the Max Planck Institute for Mathematics,
Bonn. 
It was also
supported in part by ARC grants DP110100628 and
DP130100237



\begin{thebibliography}{9999}

\bibitem{ACZ} A. Ayyad, T. Cochrane and Z. Zheng, 
`The congruence $x_1x_2 \equiv x_3x_4 \pmod p$, the equation
$x_1x_2 = x_3x_4$ and the mean value of character sums', 
{\it J. Number Theory\/}, {\bf 59} (1996), 398--413.


\bibitem{Bour1} J. Bourgain, 
`Multilinear exponential sums in prime
fields under optimal entropy condition on
the sources', {\it  Geom. and Func. Anal.\/}, 
{\bf 18} (2009), 1477--1502.
 

\bibitem{Bour2} J.~Bourgain,  
`On exponential sums in finite fields',
{\it  An Irregular Mind\/}, 
Bolyai Soc. Math. Stud., vol.~21, J{\'a}nos Bolyai Math. Soc., 
Budapest, 2010, 219--242.

\bibitem{BouCha}
J. Bourgain and M.-C. Chang, `On a multilinear  character sums of Burgess', 
{\it  Comp. Rend. Acad. Sci. Paris\/}, {\bf 348}
(2010),  115--120.


\bibitem{BouGar1}
J. Bourgain and M. Z. Garaev, `On a variant of sum-product estimates
and explicit exponential sum bounds in prime fields', {\it Math.
Proc. Cambr. Phil. Soc.\/}, {\bf 146} (2008), 1--21.

\bibitem{BouGar2}
J. Bourgain and M. Z. Garaev, `Sumsets of reciprocals in prime fields 
and multilinear Kloosterman sums', {\it Preprint\/}, 2012, 
(available from {\tt http://arxiv.org/abs/1211.4184}).


\bibitem{BGKS2} J.~Bourgain, M.~Z.~Garaev, S. V. Konyagin
and I. E. Shparlinski, `On congruences with products of variables
from short intervals and  applications',
 {\it Proc. Steklov Math. Inst.\/}, {\bf 280} (2013),  67--96. 

\bibitem{BGKS3} J.~Bourgain, M.~Z.~Garaev, S. V. Konyagin 
and I. E. Shparlinski, `Multiplicative  congruences with 
variables from short intervals', 
{\it  J. d'Analyse Math.\/}, (to appear). 

\bibitem{DavErd}
H. Davenport and P. Erd{\H o}s, `The distribution of quadratic and higher residues', 
{\it PubL Math. Debrecen\/}, {\bf 2} (1952), 252--265.

\bibitem{Del} P. Deligne, `Applications de la formule des traces aux
sommes trigonom\'etriques', {\it Lect. Notes in Mathematics\/},
Springer-Verlag, Berlin, {\bf 569} (1977), 168--232.


\bibitem{Gar} M. Z. Garaev, `Sums and products of sets and estimates of rational
trigonometric sums in fields of prime order', 
{\it Russian Math. Surveys\/}, {\bf  65}  (2010),  599--658 
(Transl. from {\it Uspekhi Mat. Nauk\/}).


 \bibitem{IwKow} H. Iwaniec and E. Kowalski,
{\it Analytic number theory\/}, Amer.  Math.  Soc.,
Providence, RI, 2004.


%


\bibitem{Luo} W. Luo, `Bounds for incomplete hyper-Kloosterman sums', 
{\it J. Number Theory\/}, \textbf{75} (1999), 41--46.

\bibitem{Shp} I. E. Shparlinski, `Bounds of incomplete multiple Kloosterman sums',  
{\it J. Number Theory\/}, {\bf 126} (2007), 68--73.

\bibitem{WaLi} Y. Wang and H.~Li,
`On $s$-dimensional incomplete Kloosterman sums',  
{\it J. Number Theory\/}, {\bf 130} (2010), 1602--1608.

\bibitem{Wool} T.~D.~Wooley,
`Vinogradov's mean value theorem via efficient congruencing, II',
{\it Duke Math. J.\/}  (to appear).

\end{thebibliography}
\end{document}